\renewcommand\pmod[1]{\;(\operatorname{mod}#1)}
\newtheorem{theorem}{Theorem}
\newtheorem*{claim}{Claim}
\theoremstyle{remark}
\newtheorem*{acknowledgements}{Acknowledgements}
\begin{document}

\title{On three theorems of Folsom, Ono and Rhoades}

\date{12 September 2013}

\author{Wadim Zudilin}
\address{School of Mathematical and Physical Sciences,
The University of Newcastle, Callaghan NSW 2308, AUSTRALIA}
\email{wadim.zudilin@newcastle.edu.au}

\thanks{The author is supported by the Australian Research Council.}
\subjclass[2010]{Primary 11F03; Secondary 11P84, 33D15}

\begin{abstract}
In his deathbed letter to G.\,H.~Hardy, Ramanujan gave a vague definition of a mock modular function:
at each root of unity its asymptotics matches the one of a modular form, though a choice of the modular
function depends on the root of unity. Recently Folsom, Ono and Rhoades have proved an elegant result
about the match for a general family related to Dyson's rank (mock theta) function and
the Andrews--Garvan crank (modular) function\,---\,the match with explicit formulae for implied $O(1)$ constants.
In this note we give another elementary proof of Ramanujan's original claim
and outline some heuristics which may be useful for obtaining a new proof
of the general Folsom--Ono--Rhoades theorem.
\end{abstract}

\maketitle

\section{Ramanujan's claim}
\label{s1}

In his deathbed letter to G.\,H.~Hardy, Ramanujan gave a vague definition of a mock modular function.
It mainly referred to a specific asymptotic behaviour of such a function at roots of unity, and Ramanujan
singled out the following illustrative example. The parameter $q$ below is always assumed to be inside the unit disc.

\begin{claim}[{Ramanujan \cite{BR95}}]
As $q$ approaches an even root of unity of order $2k$, the difference $f(q)-(-1)^kb(q)$ is absolutely bounded.
\end{claim}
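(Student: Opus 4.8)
The plan is to strip off a common factor, reduce the Claim to an asymptotic comparison of two explicit $q$-series, and then carry out that comparison by Poisson summation.

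First I would invoke the classical Lambert-series form of the mock theta function, due to Watson,
\[
(q;q)_\infty f(q)=1+4\sum_{n=1}^{\infty}\frac{(-1)^n q^{n(3n+1)/2}}{1+q^n},
\]
together with the Jacobi triple product evaluation
\[
(q;q)_\infty b(q)=\frac{(q;q)_\infty^2}{(-q;q)_\infty^2}=\Bigl(\sum_{n\in\mathbb Z}(-1)^n q^{n^2}\Bigr)^{2}=:\theta(q)^2 .
\]
Writing $G(q):=(q;q)_\infty f(q)$ we obtain
\[
f(q)-(-1)^k b(q)=\frac{G(q)-(-1)^k\theta(q)^2}{(q;q)_\infty},
\]
and since $(q;q)_\infty\to0$ as $q$ approaches any root of unity, the Claim is equivalent to the estimate $G(q)-(-1)^k\theta(q)^2=O\bigl((q;q)_\infty\bigr)$ as $q\to\zeta$, where $\zeta$ is a primitive $2k$-th root of unity. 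Here $\theta^2$ is a holomorphic modular form of weight~$1$, so the subtracted term already has a transparent behaviour at $\zeta$: with $q=\zeta e^{-t}$, $t\to0^+$, it is $O(t^{-1})$ and its leading coefficient is a Gauss sum.

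Second, keeping $q=\zeta e^{-t}$, I would split the series defining $G$ according to the residue class of $n$ modulo $2k$. The singular classes are exactly those with $n\equiv k\pmod{2k}$: then $\zeta^n=\zeta^k=-1$, so $q^n\to-1$ and the denominator $1+q^n=1-e^{-nt}$ collapses, while $(-1)^n=(-1)^k$ throughout the class---this is the origin of the sign in the statement. On this subsum, with $n=(2m+1)k$ and $1/(1-e^{-nt})=1/(nt)+\tfrac12+O(nt)$, the factor $q^{n(3n+1)/2}$ supplies a Gaussian $e^{-(3n^2+n)t/2}$ in the summation index, and Poisson summation turns the leading piece into $(-1)^k\bigl(c_\zeta/t+O(1)\bigr)$, with precisely the Gauss sum $c_\zeta$ produced by $\theta^2$; the non-singular classes $n\not\equiv k\pmod{2k}$ have denominators bounded away from $0$ and contribute a further sum to be treated in the same way. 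Subtracting $(-1)^k\theta^2$, the leading $t^{-1}$-terms cancel.

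The main obstacle is that this outline, by itself, yields only $f(q)/\bigl((-1)^k b(q)\bigr)\to1$: for the stated \emph{boundedness} of the difference one must control the \emph{whole} unbounded part of $G-(-1)^k\theta^2$, i.e.\ every term of size $\gg(q;q)_\infty$, down to the constant level and beyond. Two things then need genuine care. First, the Lambert series for $f$ converges only inside the unit disc and its individual terms blow up as $q\to\zeta$, so the separation into singular and non-singular subsums, and the passage to asymptotics, must be justified by uniform tail estimates of Abel type, Poisson summation becoming legitimate only after the substitution $q=\zeta e^{-t}$ has introduced Gaussian decay. Second---and this I expect to be the real work---the non-singular subsum, and the subleading corrections of the singular subsum (the terms beyond $1/(nt)$ and the lower-order part of the exponent $n(3n+1)/2$), will each produce terms of intermediate size, such as $t^{-1/2}$, which individually exceed $O\bigl((q;q)_\infty\bigr)$; one must show that all of these cancel, against one another and against the tail of $\theta^2$, leaving only $O\bigl((q;q)_\infty\bigr)$. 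This is an all-orders, not a leading-order, matching, and it is exactly the point at which Ramanujan's miracle---the exact cancellation down to a bounded remainder---must be forced out; I would attack it by pushing the Poisson-summation analysis of both $G$ and $\theta^2$ to complete asymptotic expansions in $t$ and comparing them coefficient by coefficient.
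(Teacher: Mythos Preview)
Your setup is sound---the Watson Lambert series and the product form of $b(q)$ are both correct, and the reduction to
\[
G(q)-(-1)^k\theta(q)^2=O\bigl((q;q)_\infty\bigr)
\]
is exactly the right reformulation. The gap is in the proposed method of attack. With $q=\zeta e^{-t}$ and $\zeta$ a primitive $2k$-th root of unity, the modular transformation of the Dedekind eta function gives $(q;q)_\infty\asymp t^{1/2}\exp(-c/t)$ for some $c>0$: the target bound is \emph{exponentially small} in $1/t$. Your plan is to push the Poisson-summation asymptotics of $G$ and of $\theta^2$ to ``complete asymptotic expansions in $t$'' and match them coefficient by coefficient. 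Even if that matching succeeds at every order---so that $G-(-1)^k\theta^2=O(t^N)$ for all $N$---this says nothing about an $O(e^{-c/t})$ bound; asymptotic expansions in powers of $t$ are blind to exponentially small remainders. So the all-orders matching you describe, however laborious, cannot by itself close the argument. (This is not a technicality: it is precisely the phenomenon that distinguishes a mock theta function from a modular one. Watson's series has a genuine ``shadow'' contribution under the modular/Poisson transformation, and this shadow is what would have to be shown to be exponentially small---a statement that requires an identity, not an expansion.)

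The paper sidesteps this entirely. Instead of asymptotics, it uses two exact $q$-series identities (specialisations of Ramanujan's bilateral formulae together with transformations from the Lost Notebook) to write each of $f(q)+4u(q)\mp b(q)$ as $(-q;q)_\infty^2$ times a series that remains bounded at the relevant roots of unity. Since $(-q;q)_\infty$ vanishes at every even root of unity, the difference $f(q)-(-1)^k b(q)+4u(q)$ tends to~$0$, which gives not only boundedness but the explicit limit $-4u(\zeta)$. The moral for your approach: you need an \emph{exact} transformation of the Lambert series (or an equivalent identity) that makes the exponentially small cancellation visible, not a term-by-term asymptotic comparison.
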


Here
$$
f(q):=1+\sum_{n=1}^\infty\frac{q^{n^2}}{(1+q)^2(1+q^2)^2\dotsb(1+q^n)^2}
$$
is the mock theta function and
\begin{align*}
b(q):=(1-q)(1-q^3)(1-q^5)\dotsb\times(1-2q+2q^4-2q^9+\dotsb)
\end{align*}
is an (almost) modular form as a function of $\tau$, where $e^{2\pi i\tau}=q$.

In order to discuss and analyse the claim we introduce the standard $q$-Pochhammer notation
$$
(a;q)_n:=\prod_{j=0}^{n-1}(1-aq^j)
\quad\text{for}\;\; n=0,1,\dots,\infty;
$$
the above functions can be given then as follows:
$$
f(q)=\sum_{n=0}^\infty\frac{q^{n^2}}{(-q;q)_n^2}
\quad\text{and}\quad
b(q)=(q;q^2)_\infty\sum_{n=-\infty}^\infty(-1)^nq^{n^2}=\frac{(q;q)_\infty}{(-q;q)_\infty^2}.
$$

Recently Folsom, Ono and Rhoades proved, in two different ways, that Ramanujan's claim can be significantly refined.
Namely, they showed that the difference $f(q)-(-1)^kb(q)$ \emph{has a limit} as $q$ approaches the corresponding even root.

\begin{theorem}[{Folsom, Ono and Rhoades \cite{FOR13a,FOR13b}}]
\label{FOR1}
If $\zeta$ is a primitive even order $2k$ root of unity, then, as $q$ approaches $\zeta$ radially within the unit disc,
\begin{equation}
\lim_{q\to\zeta}\bigl(f(q)-(-1)^kb(q)\bigr)=-4u(\zeta),
\label{ex1}
\end{equation}
where
$$
u(q):=\sum_{n=0}^\infty(-q;q)_n^2q^{n+1}.
$$
\end{theorem}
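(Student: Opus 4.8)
The plan is to replace the (genuinely singular) limit by an identity between power series in $q$ that is regular at $\zeta$, and then pass to the limit term by term. Two structural observations frame the argument. First, since $\zeta$ is a \emph{primitive} $2k$-th root of unity we have $\zeta^k=-1$ while $\zeta^j\neq-1$ for $0<j<k$, so in $(-\zeta;\zeta)_n=\prod_{j=1}^n(1+\zeta^j)$ the factor $1+\zeta^k$ is the first to vanish and does so once $n\ge k$; hence
$$
u(\zeta)=\sum_{n=0}^{k-1}(-\zeta;\zeta)_n^{\,2}\,\zeta^{\,n+1}
$$
is a finite sum, and the right-hand side of \eqref{ex1} is a concrete number. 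Second, the \emph{same} vanishing makes the $n$-th term of $f$ — which carries $(-\zeta;\zeta)_n^{\,2}$ in its denominator — blow up at $\zeta$ for $n\ge k$, while $b(q)=(q;q)_\infty/(-q;q)_\infty^2$ degenerates there too (both $(-\zeta;\zeta)_\infty$ and $(\zeta;\zeta)_\infty$ vanish). Along the radius $q=\zeta e^{-t}$ neither $f(q)$ nor $(-1)^kb(q)$ need stay bounded — for $\zeta=-1$ both grow like $e^{c/t}$, as the modular transformation of $b$ reveals — whereas by Ramanujan's Claim their difference is bounded. So the theorem asserts that these large contributions cancel down to $-4u(\zeta)$, and the real work is to find a $q$-series identity that performs this cancellation \emph{before} any limit is taken.

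Concretely I would aim at an identity of the shape
$$
f(q)=(-1)^k\,b(q)-4\,u(q)+E_k(q)\qquad(q\to\zeta\ \text{radially}),
$$
with $E_k(q)\to0$ and, ideally, with $E_k$ assembled from factors that vanish radially at $\zeta$. The natural engine is a classical transformation of Rogers--Fine/Bailey type, or one of Watson's identities for the third-order mock theta functions, which rewrites $f$ as a theta quotient plus a false-/partial-theta correction. Fed with $b(q)=(q;q)_\infty/(-q;q)_\infty^2$ and the elementary rewriting $(-q;q)_n^{-2}=(-q^{n+1};q)_\infty^2/(-q;q)_\infty^2$ — equivalently $(-q;q)_\infty^2f(q)=\sum_{n\ge0}q^{n^2}(-q^{n+1};q)_\infty^2$ — the theta quotient should collapse against $(-1)^k(q;q)_\infty$, and the remainder, divided back by $(-q;q)_\infty^2$, should reorganize into $-4$ times a series whose $n$-th term tends to $(-\zeta;\zeta)_n^{\,2}\,\zeta^{\,n+1}$. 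An alternative route, which also accounts for the sign, uses a known Appell--Lerch representation of $f$ and Zwegers' modular completion: then $f=(\text{a genuinely modular object})+(\text{a nonholomorphic Eichler integral})$; the modular object is controlled by its transformation law — here is where the ``(almost) modular'' nature of $b$ enters, the sign $(-1)^k$ appearing as the relevant theta-multiplier at the cusp $\zeta$ — so it has the same radial limit as $(-1)^kb(q)$, while the Eichler integral, an incomplete Gauss/theta sum, degenerates radially to $-4u(\zeta)$. In Zagier's language this says that $f-(-1)^kb$ is a quantum modular object on the even roots of unity with explicit values.

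Granting such an identity, the rest is careful but standard $q$-Pochhammer asymptotics: set $q=\zeta e^{-t}$, expand each factor $1\pm q^j$ to first order in $t$ — grouping them by the residue of $j$ modulo $2k$, so that the single common vanishing factor cancels — and let $t\to0^+$ term by term. The interchange of limit and sum is justified by dominated convergence for the reorganized series (essentially the one defining $u$, which already converges at $\zeta$ to the finite sum $u(\zeta)$): along a short radial segment ending at $\zeta$ its terms are dominated by a fixed summable sequence. Collecting the surviving terms — the genuinely singular pieces having cancelled through the identity — yields $\lim_{q\to\zeta}(f(q)-(-1)^kb(q))=-4u(\zeta)$.

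The main obstacle is the identity of the second paragraph: pinning down the precise classical transformation, and the exact shape of $E_k$, that matches the two (in general unbounded) quantities $f(q)$ and $(-1)^kb(q)$ term by term near every even root of unity. Once that algebraic identity is secured, the limit itself — the $t$-expansions, the bookkeeping of the multiplier sign, and the tail estimate — is routine.
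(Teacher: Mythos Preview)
Your strategy is the right one, and in fact it is exactly the paper's strategy: find a $q$-series identity expressing $f(q)+4u(q)-(-1)^kb(q)$ as something that \emph{visibly} vanishes as $q\to\zeta$, so that the singular limit is replaced by an algebraic cancellation. But you stop at precisely the point where the real content lies. You write that ``the main obstacle is the identity of the second paragraph: pinning down the precise classical transformation, and the exact shape of $E_k$'' --- and you never pin it down. Invoking ``Rogers--Fine/Bailey type'' transformations, Watson's identities, or Zwegers' completion in the abstract is not a proof; the whole difficulty is to name the identity that does the job. (Your ``alternative route'' via the nonholomorphic completion is, incidentally, the heavy-machinery argument of Folsom--Ono--Rhoades that the paper is expressly trying to replace by something elementary.)

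Here is what the paper actually does, and what your proposal is missing. Since $f=R(-1;\cdot)$, $b=C(-1;\cdot)$ and $u=U(-1;\cdot)$, Ramanujan's own identity \cite[Entry~3.4.7]{AB09}
\[
R(w;q)+(1-w)(1-w^{-1})U(w;q)=\frac{1-w^{-1}}{(q;q)_\infty}\sum_{n\in\mathbb Z}\frac{q^{n(n+1)/2}(-w)^n}{1-w^{-1}q^n}
\]
together with the crank expression \cite[Entry~12.2.2]{AB05}
\[
C(w;q)=\frac{1-w^{-1}}{(q;q)_\infty}\sum_{n\in\mathbb Z}\frac{q^{n(n+1)/2}(-1)^n}{1-w^{-1}q^n}
\]
at $w=-1$ give, after splitting $n$ by parity,
\[
f(q)+4u(q)\mp b(q)=\frac{4}{(q;q)_\infty}\sum_{j\in\mathbb Z}\frac{q^{j(2j\mp1)}}{1+q^{2j\mp1+\{0,1\}}}.
\]
The decisive step is then to invoke two further identities from Ramanujan's lost notebook, \cite[eqs.~(12.2.6)--(12.2.7)]{AB05}, which recast each of these Appell--Lerch sums as $(-q;q)_\infty^2$ times a series whose terms stay bounded at the relevant root of unity (according to the parity of $k$). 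Since $(-q;q)_\infty$ vanishes at every even-order root of unity, $E_k(q)\to0$ follows immediately. That is the ``precise classical transformation'' your outline gestures at but does not supply; without it, the argument is incomplete.
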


\begin{theorem}[{Folsom, Ono and Rhoades \cite{FOR13b}}]
\label{FOR2}
If $\zeta$ is a primitive even order $2k$ root of unity, then, as $q$ approaches $\zeta$ radially within the unit disc,
\begin{equation}
\lim_{q\to\zeta}\bigl(f(q)-(-1)^kb(q)\bigr)
=\begin{cases}
-4\psi(-\zeta) & \text{for $k$ even}, \\
\phantom+2\phi(-\zeta) & \text{for $k$ odd},
\end{cases}
\label{ex2}
\end{equation}
where
$$
\psi(q):=\sum_{n=0}^\infty(-q^2;q^2)_nq^{n+1}
\quad\text{and}\quad
\phi(q):=1+\sum_{n=0}^\infty(-1)^n(q;q^2)_nq^{2n+1}.
$$
\end{theorem}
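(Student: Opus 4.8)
The plan is to derive Theorem~\ref{FOR2} from Theorem~\ref{FOR1}. By \eqref{ex1}, for a primitive order $2k$ root of unity $\zeta$ the limit in question equals $-4u(\zeta)$, so it suffices to establish the evaluations
\begin{equation*}
u(\zeta)=\psi(-\zeta)\quad(k\text{ even}),\qquad u(\zeta)=-\tfrac12\phi(-\zeta)\quad(k\text{ odd}).
\end{equation*}
Each side here is in fact a \emph{finite} sum: since $(-\zeta;\zeta)_n=\prod_{j=1}^n(1+\zeta^j)$ vanishes once $n\ge k$ (because $\zeta^k=-1$), we have $u(\zeta)=\sum_{n=0}^{k-1}(-\zeta;\zeta)_n^2\zeta^{n+1}$; likewise $(-\zeta^2;\zeta^2)_n$ vanishes for $n\ge k/2$ when $k$ is even and $(-\zeta;\zeta^2)_n$ vanishes for $n\ge(k+1)/2$ when $k$ is odd, truncating the defining series of $\psi(-\zeta)$ and $\phi(-\zeta)$. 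Thus the target reduces to a pair of identities between explicit finite sums of lengths $k$ and about $k/2$.

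To prove these I would look for a $q$-series transformation rather than a direct root-of-unity computation. The idea is to rewrite $u(q)=\sum_{n\ge0}(-q;q)_n^2q^{n+1}=q\sum_{n\ge0}\bigl((q^2;q^2)_n/(q;q)_n\bigr)^2q^n$, by a suitable specialization of the Rogers--Fine identity (or of a Heine/Bailey-type transformation), as a sum of two series: one that telescopes termwise into $\psi(-q)$, respectively into $-\tfrac12\phi(-q)$, and a remainder series each of whose terms carries a factor---an infinite product such as $(-q^2;q^2)_\infty$, or a partial-theta tail---that tends radially to $0$ at the primitive order $2k$ roots of unity of the relevant parity. Evaluating at $q=\zeta$ then annihilates the remainder and leaves exactly the finite identity. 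A more hands-on alternative is to attack the finite identities directly by ``folding'': using $\zeta^k=-1$ and $\zeta^{j+2k}=\zeta^j$ one pairs the index $n$ in $\sum_{n=0}^{k-1}(-\zeta;\zeta)_n^2\zeta^{n+1}$ with a complementary index (for instance $k-1-n$), which should collapse the length-$k$ sum onto the length-$(k/2)$ sum defining $\psi$; the $k$ odd case runs in parallel but sheds the constant $1$ and the factor $-\tfrac12$, which is precisely what the normalization of $\phi$ (with its free leading $1$) is built to absorb.

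The main obstacle is twofold. First, one must pin down the transformation (or folding) that outputs \emph{these} partial theta series $\psi$ and $\phi$ on the nose, with the correct constant term and the correct scalars $-4$ and $+2$: many nearby manipulations give the right shape only up to an additive constant or an overall sign. Second, the $k$ even and $k$ odd cases genuinely differ in form---$\psi$ is assembled from $(-q^2;q^2)_n$, while $\phi$ uses $(q;q^2)_n$ with alternating signs plus a free $1$---so either two parallel arguments are needed, or a single identity whose dependence on $\zeta^k=-1$ automatically bifurcates. Once the transformation is in hand, checking that the remainder vanishes radially at the relevant roots is a routine estimate, and what remains is bookkeeping.
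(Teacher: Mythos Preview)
The paper does not give its own proof of Theorem~\ref{FOR2}; it only states the result and refers to \cite{FOR13b} for the elementary proof, remarking that that proof ``makes use of $q$-series transformations only.'' The paper's new contribution is the independent proof of Theorem~\ref{FOR1} in Section~\ref{s3}. So there is no proof in this paper to compare your argument against.

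Assessing your proposal on its own terms: the reduction is correct. Granting Theorem~\ref{FOR1}, Theorem~\ref{FOR2} is equivalent to the finite evaluations $u(\zeta)=\psi(-\zeta)$ for $k$ even and $u(\zeta)=-\tfrac12\phi(-\zeta)$ for $k$ odd, and your truncation remarks (that $(-\zeta;\zeta)_n$, $(-\zeta^2;\zeta^2)_n$, $(-\zeta;\zeta^2)_n$ vanish beyond the stated ranges) are accurate. But the proposal is not a proof: you never exhibit the transformation or the folding that would establish these identities. The phrases ``a suitable specialization of the Rogers--Fine identity,'' ``should collapse,'' and ``once the transformation is in hand'' mark exactly where the mathematical content is missing, and you yourself list finding this transformation as the ``main obstacle.'' Until a specific identity is written down and verified\,---\,with the constants $-4$ and $+2$ and the free $1$ in $\phi$ falling out correctly\,---\,what you have is a plan with a gap at its centre, not a proof. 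Note also that your route (Theorem~\ref{FOR2} via Theorem~\ref{FOR1}) is the reverse of how the literature is organised: \cite{FOR13b} proves Theorem~\ref{FOR2} directly, whereas here Theorem~\ref{FOR1} is the target.
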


Note that the sums on the right-hand sides in~\eqref{ex1} and \eqref{ex2} terminate at the even root of unity.

It is not at all obvious that there are no modular forms
which \emph{exactly} cut out the singularities of a mock theta function,
in particular, of Ramanujan's $f(q)$:
the presence on the right-hand side of \eqref{ex1} (or~\eqref{ex2})
of a nonzero term, which depends on the root of unity,
is essential. This is proven Griffin, Ono and Rolen in the 2013 paper~\cite{GOR13}.

The reader intrigued by Ramanujan's mock theta functions is
referred to the inspiring expositions of Ono~\cite{Ono09} and Zagier~\cite{Za09}
on development of the subject.

Interestingly enough, Theorem~\ref{FOR2} possesses an elementary proof \cite{FOR13b} that makes use of $q$-series
transformations only, while Theorem~\ref{FOR1} is a particular instance of a much more general result
(Theorem~\ref{FOR3} stated below) whose proof uses a modern machinery of mock theta functions~\cite{FOR13a}.
The principal goal of this note is to produce a simpler proof (challenged in~\cite{FOR13b}) of Theorem~\ref{FOR1}, a proof
that Ramanujan had all ingredients for. All ingredients except possibly time.

\section{Partition generating functions}
\label{s2}

As pointed out by many authors, the relation between Dyson's rank generating function
$$
R(w;q):=\sum_{n=0}^\infty\frac{q^{n^2}}{(wq;q)_n\cdot(w^{-1}q;q)_n}
$$
and the series
$$
U(w;q):=\sum_{n=0}^\infty(wq;q)_n\cdot(w^{-1}q;q)_n\cdot q^{n+1},
$$
which is related to count of strongly unimodal sequences, has been already given by Ramanujan \cite[Entry 3.4.7, p.~67]{AB09}:
\begin{equation}
R(w;q)+(1-w)(1-w^{-1})U(w;q)
=\frac{1-w^{-1}}{(q;q)_\infty}\sum_{n=-\infty}^\infty\frac{q^{n(n+1)/2}(-w)^n}{1-w^{-1}q^n}.
\label{rama1}
\end{equation}
The left-hand side of the series is nothing but a limiting case of bilateral $_2\psi_2$-series,
$$
\sum_{n=-\infty}^\infty\frac{q^{n^2}}{(wq;q)_n\cdot(w^{-1}q;q)_n},
$$
and the above equality \eqref{rama1} follows from a general transformation due to W.\,N.~Bailey.

Though the Andrews--Garvan crank function
$$
C(w;q):=\frac{(q;q)_\infty}{(wq;q)_\infty\cdot(w^{-1}q;q)_\infty}
$$
is not immediately linked to $R(w;q)$ and $U(w;q)$, its similarity with the right-hand side of~\eqref{rama1}
becomes apparent from the expression \cite[Entry 12.2.2, p.~264]{AB05}
\begin{equation}
C(w;q)=\frac{1-w^{-1}}{(q;q)_\infty}\sum_{n=-\infty}^\infty\frac{q^{n(n+1)/2}(-1)^n}{1-w^{-1}q^n}.
\label{rama2}
\end{equation}
A surprising fact is that the asymptotics of $C(w;q)$ is related, in a simple way,
to the asymptotics of \eqref{rama1} when $w$ is chosen to be a root of unity and $q$ approaches another root
of unity radially. This remarkable relation is proven in the recent work of Folsom, Ono and Rhoades.
The notation $\zeta_m$ below is used for the root of unity $e^{2\pi i/m}$.

\begin{theorem}[{Folsom, Ono and Rhoades \cite{FOR13a,FOR13b}}]
\label{FOR3}
Let $1\le a<b$ and $1\le h<m$ be integers with $\gcd(a,b)=\gcd(h,m)=1$ and $b\mid m$.
If $h'$ is an integer satisfying $hh'\equiv1\pmod m$, then, as $q$ approaches $\zeta_m^h$ radially within the unit disc,
we have
\begin{equation}
\lim_{q\to\zeta_m^h}\bigl(R(\zeta_b^a;q)-\zeta_{b^2}^{h'a^2m}\cdot C(\zeta_b^a;q)\bigr)
=-(1-\zeta_b^a)(1-\zeta_b^{-a})U(\zeta_b^a;\zeta_m^h).
\label{for1}
\end{equation}
\end{theorem}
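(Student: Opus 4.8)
The plan is to deduce \eqref{for1} from Ramanujan's transformation \eqref{rama1} together with the crank identity \eqref{rama2}, deferring the genuine analytic point to the last step. Write $w:=\zeta_b^a$ and $\lambda:=\zeta_{b^2}^{h'a^2m}$. Since $b\mid m$ and $\gcd(a,b)=1$, a short computation shows that $w^n=\lambda$ precisely when $n\equiv h'am/b\pmod b$, while $q^n\to w$ as $q\to\zeta_m^h$ precisely when $n\equiv h'am/b\pmod m$; in particular the second residue class lies inside the first, so $\lambda$ is exactly the common value of $w^n$ over those $n$ for which $q^n$ tends to $w$. Using $(-w)^n=(-1)^nw^n$ and subtracting $\lambda$ times \eqref{rama2} from \eqref{rama1} then yields, for $|q|<1$, the exact identity
\[
R(w;q)-\lambda C(w;q)+(1-w)(1-w^{-1})U(w;q)=\frac{1-w^{-1}}{(q;q)_\infty}\sum_{n\in\mathbb Z}\frac{(-1)^nq^{n(n+1)/2}(w^n-\lambda)}{1-w^{-1}q^n}.
\]

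Two reductions then leave a single assertion. First, $U(w;q)$ extends continuously to $\zeta_m^h$: at $q=\zeta_m^h$ the factor $(wq;q)_n$ picks up a vanishing factor as soon as $n$ reaches the least positive $j$ with $j\equiv-h'am/b\pmod m$, so $U(w;\zeta_m^h)$ is a terminating sum and $U(w;q)\to U(w;\zeta_m^h)$; in the identity this is the term that supplies the right-hand side of \eqref{for1}. Second, every term of the bilateral sum with $w^n=\lambda$ vanishes identically, so that sum equals $\Sigma(w;q):=\sum_{w^n\ne\lambda}(-1)^nq^{n(n+1)/2}(w^n-\lambda)/(1-w^{-1}q^n)$, whose denominators — by the containment above — stay bounded away from $0$ as $q\to\zeta_m^h$. (This is the role of the twist $\zeta_{b^2}^{h'a^2m}$: it is the unique multiplier of $C(w;q)$ that cancels the incipient singularities of $R(w;q)$ and $C(w;q)$ against one another.) Hence Theorem~\ref{FOR3} is equivalent to the statement that $\Sigma(w;q)/(q;q)_\infty\to0$ as $q\to\zeta_m^h$ radially.

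That last limit is the main obstacle. The difficulty is that $\Sigma(w;q)$ does not converge term-by-term at $q=\zeta_m^h$, where $|q^{n(n+1)/2}|=1$, so its radial limit must be extracted and then shown to be not merely finite but small against $(q;q)_\infty=q^{-1/24}\eta(\tau)$, which is exponentially small at the cusp $\zeta_m^h$ by the transformation $\tau\mapsto-1/\tau$. I would expand the denominators geometrically — $1/(1-w^{-1}q^n)=\sum_{p\ge0}w^{-p}q^{np}$ for $n\ge1$ and $1/(1-w^{-1}q^n)=-\sum_{p\ge1}w^{p}q^{-np}$ for $n\le-1$ — interchange summations and collect powers of $q$; the $-\lambda$ part is already a closed product, since $\sum_n(-1)^nq^{n(n+1)/2}/(1-w^{-1}q^n)=(q;q)_\infty^2/\bigl((wq;q)_\infty(w^{-1};q)_\infty\bigr)$ by \eqref{rama2} and the product formula for $C(w;q)$, and the hope is that the $w^n$ part, once the classes $w^n=\lambda$ are deleted, reorganizes into a finite combination of bilateral theta functions whose radial behaviour at $\zeta_m^h$ is again read off from $\tau\mapsto-1/\tau$ and is of strictly faster exponential decay there than $\eta(\tau)$. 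Identifying that combination and carrying out the exponent bookkeeping that certifies the strict gap between decay rates is where I expect the labour to concentrate; an alternative nearer to Ramanujan's own methods is to keep $C(w;q)=(q;q)_\infty/\bigl((wq;q)_\infty(w^{-1}q;q)_\infty\bigr)$ as a product, extract its radial asymptotics from the transformation of the relevant generalized Dedekind eta product, perform the parallel and harder asymptotic analysis of the rank $R(w;q)$ from a Hecke-type double series, and subtract.
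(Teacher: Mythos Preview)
The paper does not itself prove Theorem~\ref{FOR3}; it quotes the result from \cite{FOR13a,FOR13b} and remarks that the original proof ``uses a modern machinery of mock theta functions.'' What the paper does offer, in Section~\ref{s4}, is precisely the heuristic you have rediscovered: split the Appell--Lerch sums in \eqref{rama1} and \eqref{rama2} according to residues $n\pmod m$, identify the single collapsing residue class $c_0\equiv h'am/b\pmod m$, and observe that this produces exactly the root of unity $\zeta_{b^2}^{h'a^2m}$ as the ratio of the dominant pieces. The paper then makes the same concession you do, stating explicitly that ``knowledge of the limiting behaviour of the quotient, of course, does not imply that the difference of the two tends to~$0$.''

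Your reduction is a cleaner packaging of the same idea: subtracting $\lambda$ times \eqref{rama2} from \eqref{rama1} kills the numerators with $w^n=\lambda$ and leaves a sum $\Sigma(w;q)$ whose denominators stay bounded away from zero, so the whole theorem becomes equivalent to $\Sigma(w;q)/(q;q)_\infty\to0$ radially. That equivalence is correct and is slightly sharper than what the paper writes down. But the gap you name is genuine and is the entire difficulty: $(q;q)_\infty$ decays exponentially fast at the cusp, so you must show that $\Sigma(w;q)$ decays strictly faster, while the series defining $\Sigma$ does not even converge at $|q|=1$. Your sketched plan---reorganize into a finite combination of theta functions and compare transformation exponents---is a plausible route, but neither you nor the paper carries it out; the published proof in \cite{FOR13a} bypasses this step via the harmonic Maass form completion of $R(w;q)$ rather than by attacking the Appell--Lerch difference directly. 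In short, your proposal matches the paper's own heuristics and stops, honestly, at the same unresolved point.
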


Taking $a=1$, $b=2$ and $m=2k$, so that $\zeta_b^a=-1$ and $\zeta=\zeta_m^h$ is a primitive even order $2k$ root of unity,
we arrive at Theorem~\ref{FOR1}, because in this case $f(q)=R(-1;q)$, $b(q)=C(-1;q)$ and $u(q)=U(-1;q)$.

\section{Proof of Theorem~\ref{FOR1}}
\label{s3}

\begin{proof}[Proof of Theorem~\textup{\ref{FOR1}}]
Since $f(q)=R(-1;q)$, $b(q)=C(-1;q)$ and $u(q)=U(-1;q)$, formulas \eqref{rama1} and \eqref{rama2} imply
\begin{align*}
f(q)+4u(q)-b(q)
&=\frac4{(q;q)_\infty}\sum_{j=-\infty}^\infty\frac{q^{j(2j-1)}}{1+q^{2j-1}},
\\
f(q)+4u(q)+b(q)
&=\frac4{(q;q)_\infty}\sum_{j=-\infty}^\infty\frac{q^{j(2j+1)}}{1+q^{2j}}.
\end{align*}
The right-hand sides can be further transformed:
\begin{subequations}
\label{tr}
\begin{align}
\frac1{(q;q)_\infty}\sum_{j=-\infty}^\infty\frac{q^{j(2j-1)}}{1+q^{2j-1}}
&=2(-q;q)_\infty^2\sum_{n=1}^\infty\frac{(-1)^{n-1}(q;q^2)_{n-1}q^{n^2}}{(-q;q^2)_n^2},
\label{tr1}
\\
\frac1{(q;q)_\infty}\sum_{j=-\infty}^\infty\frac{q^{j(2j+1)}}{1+q^{2j}}
&=\frac12\,(-q;q)_\infty^2\sum_{n=0}^\infty\frac{(-1)^n(q;q^2)_nq^{n^2}}{(-q^2;q^2)_n^2};
\label{tr2}
\end{align}
\end{subequations}
the first formula is \cite[eq.~(12.2.7), p.~264]{AB05} with $c=1$ and $q$ replaced with $-q$, while
the second one is \cite[eq.~(12.2.6), p.~264]{AB05} with $c=1$.

It remains to notice that the pre-factor $(-q;q)_\infty$ vanishes at any even root of unity, while
the sums on the right-hand sides in~\eqref{tr}
have finite limits as $q$ approaches an order $2k$ root of unity, when $k$ is even and odd, respectively.
\end{proof}

The right-hand side in~\eqref{tr1} admits a different presentation \cite[eq.~(12.5.1), p.~280]{AB05}
that leads to
$$
\frac1{(q;q)_\infty}\sum_{j=-\infty}^\infty\frac{q^{j(2j-1)}}{1+q^{2j-1}}
=2(-q;q)_\infty^2\sum_{n=1}^\infty\frac{(-1)^{n-1}(-q^2;q^2)_{n-1}q^n}{(-q;q^2)_n},
$$
a formula that can be used instead of~\eqref{tr1} in the proof. The `literal' analogue of the latter sum
for the right-hand side in~\eqref{tr2} is instead a partial theta-function \cite[eq.~(6.3.5), p.~120]{AB09},
$$
\sum_{n=0}^\infty\frac{(-1)^n(-q;q^2)_nq^n}{(-q^2;q^2)_n}
=\sum_{n=0}^\infty(-1)^nq^{n(n+1)/2}.
$$
In the odd $k$ case the radial asymptotics can be also controlled with a help of the different identity
\cite[eq.~(3.6.14), p.~79]{AB09}
\begin{align*}
\sum_{j=-\infty}^\infty\frac{q^{j(2j+1)}}{1+q^{2j}}
=(-q;q)_\infty(-q;q^2)_\infty\sum_{n=-\infty}^\infty\frac{(-q)^{n(n+1)/2}}{1+q^{2n}}.
\end{align*}
An analogue of this identity for the even $k$ case, with a proof similar to the one given in \cite[p.~79]{AB09},
seems to be inadequate for the purposes:
$$
\sum_{j=-\infty}^\infty\frac{q^{j(2j-1)}}{1+q^{2j-1}}
=\frac{2\,(-q^2;q^2)_\infty^2(q;q^2)_\infty}{(-q;q^2)_\infty(q^2;q^2)_\infty^2}\sum_{n=-\infty}^\infty\frac{(-1)^nn(-q)^{n(n+1)/2}}{1+q^{2n}}.
$$

It is worth mentioning that the two identities \eqref{tr} were used in~\cite{Mor13} to give a short proof
of another result about mock theta functions from the paper~\cite{BOR08}.

\section{Asymptotics related to Theorem~\ref{FOR3}}
\label{s4}

It would be interesting to extend the identities of the previous section to also prove
general Theorem~\ref{FOR3}. Here we briefly discuss some related asymptotics.

In light of~\eqref{rama1}, Theorem~\ref{FOR3} means in particular that the quotient of
$$
\frac{1-w^{-1}}{(q;q)_\infty}\sum_{n=-\infty}^\infty\frac{q^{n(n+1)/2}(-w)^n}{1-w^{-1}q^n}\bigg|_{w=\zeta_b^a}
$$
and $C(\zeta_b^a;q)$ tends to $\zeta_{b^2}^{h'a^2m}$ as $q$ approaches $\zeta_m^h$ radially. This fact
follows from an elementary argument reproduced below, though knowledge of the limiting behaviour of the quotient, of course,
does not imply that the difference of the two tends to~0.

To see this fact, write $q=\zeta_m^hr$, where $r\to1^-$, and split the Appell--Lerch sums (including the one in \eqref{rama2})
into $m$ subsums according to the residue $n\pmod m$:
\begin{equation*}
\begin{aligned}
\sum_{n=-\infty}^\infty\frac{q^{n(n+1)/2}(-1)^n\zeta_b^{an}}{1-\zeta_b^{-a}q^n}
&=\sum_{c=0}^{m-1}\zeta_b^{ac}\sum_{n\equiv c\pmod m}\frac{(\zeta_m^hr)^{n(n+1)/2}(-1)^n}{1-\zeta_b^{-a}\zeta_m^{hc}r^n},
\\
\sum_{n=-\infty}^\infty\frac{q^{n(n+1)/2}(-1)^n}{1-\zeta_b^{-a}q^n}
&=\sum_{c=0}^{m-1}\sum_{n\equiv c\pmod m}\frac{(\zeta_m^hr)^{n(n+1)/2}(-1)^n}{1-\zeta_b^{-a}\zeta_m^{hc}r^n}.
\end{aligned}
\end{equation*}
As $r\to1^-$, each double sum involves a single collapsing subsum that corresponds to the residue $c=c_0$ for which
$\zeta_b^{-a}\zeta_m^{hc_0}=\zeta_m^{-am/b+hc_0}=1$, so that $c_0\equiv h'am/b\pmod m$. This results in the
root of unity
$$
\zeta_b^{ac}\big|_{c=c_0}=\zeta_b^{h'a^2m/b}=\zeta_{b^2}^{h'a^2m}
$$
for the limit of the quotient as $r\to1^-$.

\section{Algebraic independence of $q$-zeta values}
\label{s5}

The elementary technique of Section~\ref{s4} about asymptotic behaviour at roots of unity was used by Pupyrev
in~\cite{Pu05} to establish some (functional) linear and algebraic independence results for the so-called $q$-zeta values
$$
\zeta_q(s):=\sum_{m=1}^\infty\frac{m^{s-1}q^m}{1-q^m}=\sum_{n=1}^\infty\sigma_{s-1}(n)q^n,
\quad\text{where}\;\;
\sigma_{s-1}(n):=\sum_{d\mid n}d^{s-1}.
$$
For even $s$, these $q$-series are related, in a simple way, to the classical Eisenstein series. In particular,
$P(q):=1-24\zeta_q(2)$ (a quasi-modular form), and $Q(q):=1+240\zeta_q(4)$ and $R(q):=1-504\zeta_q(6)$
(modular forms of weight 4 and~6) are algebraically independent over $\mathbb C(q)$,
while all other even $q$-zeta values are expressible as polynomials in $\zeta_q(4)$ and $\zeta_q(6)$.

Presumably $\zeta_q(2)$, $\zeta_q(4)$, $\zeta_q(6)$ and all odd $q$-zeta values $\zeta_q(1)$, $\zeta_q(3)$, $\dots$
are algebraically independent over $\mathbb C(q)$.
(This can be thought of as a functional $q$-analogue of the longstanding algebraic independence of $\zeta(2)=\pi^2/6$ and
all odd zeta values $\zeta(3)$, $\zeta(5)$, $\dots$\,.)

An interesting problem is to understand how `mock' the odd $q$-zeta values are,
and how helpful their mockness is for proving the expected algebraic independence.

\begin{acknowledgements}
I warmly thank Ken Ono and Ole Warnaar for conversations that led me to the elementary proof of Theorem~\ref{FOR1}.
\end{acknowledgements}

\end{document}